\newtheorem{theorem}{Theorem}[section]
\theoremstyle{definition}
\newtheorem{example}[theorem]{Example}
\theoremstyle{remark}
\newtheorem{remark}[theorem]{Remark}
\numberwithin{equation}{section}
\providecommand{\keywords}[1]{\textbf{\textit{Keywords:}} #1}
\providecommand{\subjclass}[1]{\textbf{\textit{MSC2020:}} #1}
\begin{document}

\nocite{*} 

\title{ On the Generalization of Weinberger's Inequality with Alternating Signs}

\author{Hailu Bikila Yadeta \\ email: \href{mailto:haybik@gmail.com}{haybik@gmail.com} }
  \affil{Salale University, College of Natural Sciences, Department of Mathematics\\ Fiche, Oromia, Ethiopia}
\date{\today}
\maketitle
\noindent
\begin{abstract}
\noindent For  given set of $m$ positive numbers satisfying the conditions:
$$ a_1  \geq a_2 \geq , ... \geq a_m  \geq 0, $$
the inequality
$$   \sum_{s=1}^{m} (-1)^{s-1}a^r_s  \geq \left[ \sum_{s=1}^{m} (-1)^{s-1}a_s\right]^r, \quad r > 1,  $$
was proved by H. Weinberger. 
The generalization of  Weinberger's result takes the form
$$   \sum_{s=1}^{m} (-1)^{s-1}f(a_s)  \geq f\left( \sum_{s=1}^{m} (-1)^{s-1}a_s\right),  $$
where $f$ is a convex function satisfying the condition $f(0)\leq 0 $. The condition  $f(0)\geq 0 $ in the generalization proposed by Bellman was corrected by Olkin as $f(0) \leq 0 $. Bellman gave only  a graphical proof for differentiable convex functions.
In this paper, we give a mathematical proof for the generalized inequality  including the importance of the condition $f(0)\leq 0$.  We introduce a set $\mathcal{W}$ of functions so that functions in the intersection of $\mathcal{W}$  and the set of all convex functions are  the ones that are desirable in the generalization.  In addition, we give a proof of Szeg\"{o}'s inequality which applies to sums with odd number of terms.
\end{abstract}

\noindent\keywords{inequality, alternating terms, convex function, monotone increasing function, floor function }\\
\subjclass{Primary 26D15, }\\
\subjclass{Secondary 26D07}

\section{Introduction}
 H. Weinberger  \cite{HW} proved an inequality with alternating signs.
\begin{theorem}[\textbf{H. Weinberger}]\label{eq:Weinberger's}
    For  given set of $m$ positive numbers satisfying the conditions
    \begin{equation}\label{eq:nonnegativedomaincondition}
    a_1  \geq a_2 \geq , ... \geq a_m \geq 0,
    \end{equation}
the inequality
\begin{equation}\label{eq:WinbergerInequality}
  \sum_{s=1}^{m} (-1)^{s-1}a^r_s  \geq \left[ \sum_{s=1}^{m} (-1)^{s-1}a_s\right]^r, \quad r > 1
\end{equation}
holds true.
\end{theorem}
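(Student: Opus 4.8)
The plan is to derive Theorem~\ref{eq:Weinberger's} from the convexity of $f(t)=t^{r}$ on $[0,\infty)$ (which also satisfies $f(0)=0$) by a telescoping argument run through the partial sums of the alternating series; in fact the same argument works verbatim for an arbitrary convex $f$ with $f(0)\le 0$, i.e.\ it proves the generalization announced in the abstract, so I would state and prove that version and then read off \eqref{eq:WinbergerInequality} by taking $f(t)=t^{r}$. Write $P_{0}=0$ and $P_{k}=\sum_{s=1}^{k}(-1)^{s-1}a_{s}$ for $1\le k\le m$, so that $P_{m}$ is the right-hand side of \eqref{eq:WinbergerInequality}. The first step is a purely order-theoretic remark using only \eqref{eq:nonnegativedomaincondition}: every partial sum is nonnegative. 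This follows by induction on $k$: $P_{0}=0$, $P_{1}=a_{1}\ge 0$; for odd $k$, $P_{k}=P_{k-1}+a_{k}\ge 0$; and for even $k$, $P_{k}=P_{k-2}+a_{k-1}-a_{k}\ge 0$ since $P_{k-2}\ge 0$ and $a_{k-1}\ge a_{k}$.

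The second ingredient is the standard ``increasing increments'' property of convexity: if $f$ is convex on $[0,\infty)$ and $h\ge 0$, then $x\mapsto f(x+h)-f(x)$ is nondecreasing, i.e.\ $0\le p\le q$ implies $f(p+h)-f(p)\le f(q+h)-f(q)$; for $f(t)=t^{r}$ with $r>1$ this is just the monotonicity of $f'(t)=rt^{r-1}$. With these in hand I would telescope. From $\sum_{k=1}^{m}\bigl(f(P_{k})-f(P_{k-1})\bigr)=f(P_{m})-f(0)$ one gets
\[
\sum_{s=1}^{m}(-1)^{s-1}f(a_{s})-f(P_{m})=-f(0)+\sum_{k=1}^{m}\Delta_{k},
\qquad
\Delta_{k}:=(-1)^{k-1}f(a_{k})-f(P_{k})+f(P_{k-1}).
\]
A direct computation gives $\Delta_{1}=f(0)$ and, for even $k$, $\Delta_{k}=\bigl(f(P_{k}+a_{k})-f(P_{k})\bigr)-f(a_{k})\ge -f(0)$ by the increments property at the base points $0\le P_{k}$. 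For an odd index $k\ge 3$ the single term only satisfies $\Delta_{k}\le f(0)$, which points the wrong way, so the decisive move is to pair each even index $2j$ with the following odd index $2j+1$; expanding and cancelling $f(P_{2j})$,
\[
\Delta_{2j}+\Delta_{2j+1}=\bigl(f(P_{2j}+a_{2j})-f(P_{2j}+a_{2j+1})\bigr)-\bigl(f(a_{2j})-f(a_{2j+1})\bigr)\ge 0,
\]
the inequality because $a_{2j}\ge a_{2j+1}$, $P_{2j}\ge 0$, and increments grow with the base point. Summing $\Delta_{1}=f(0)$, the nonnegative paired blocks, and — only when $m$ is even — the leftover term $\Delta_{m}\ge -f(0)$, I obtain $\sum_{k}\Delta_{k}\ge 0$ for even $m$ and $\sum_{k}\Delta_{k}\ge f(0)$ for odd $m$; since $f(0)\le 0$ this gives $-f(0)+\sum_{k}\Delta_{k}\ge 0$ in both cases, which is the desired inequality, and \eqref{eq:WinbergerInequality} on specializing to $f(t)=t^{r}$.

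The step I expect to be the real obstacle — and the reason only a graphical proof has been on record — is the temptation to estimate termwise. Pairing $a_{2j-1}$ with $a_{2j}$ and using $f(a_{2j-1})-f(a_{2j})\ge f(a_{2j-1}-a_{2j})$ reduces \eqref{eq:WinbergerInequality} to $\sum_{j}f(d_{j})\ge f\bigl(\sum_{j}d_{j}\bigr)$ with $d_{j}=a_{2j-1}-a_{2j}\ge 0$, i.e.\ to subadditivity of $f$, which is \emph{false} for a convex $f$. The telescoping above is engineered never to call for subadditivity: convexity is used only through monotonicity of increments along a single axis, and the pairing $(2j,2j+1)$ is chosen precisely so that the base points that appear, $P_{2j}$, are the nonnegative quantities furnished by \eqref{eq:nonnegativedomaincondition}, making each paired block nonnegative. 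The only remaining work is to keep track of the two boundary terms $\Delta_{1}$ and (for even $m$) $\Delta_{m}$, and it is exactly here that $f(0)\le 0$ is seen to be the hypothesis that closes the estimate — which is why Olkin's correction $f(0)\le 0$, rather than Bellman's $f(0)\ge 0$, is the right condition.
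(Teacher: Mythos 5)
Your argument is correct, and although it is packaged as a single telescoping identity rather than an induction, it is in substance the same proof that the paper assembles in pieces: the paper first reduces \eqref{eq:WinbergerInequality} to the statement for convex $f$ with $f(0)\le 0$, proves the odd-length case (Szeg\H{o}'s inequality) by induction on pairs, where the inductive step is exactly your paired-block inequality $\Delta_{2j}+\Delta_{2j+1}\ge 0$ in the form $f(A-B+C)\le f(A)-f(B)+f(C)$ for $A\ge B\ge C\ge 0$ with $A=P_{2j-1}$, $B=a_{2j}$, $C=a_{2j+1}$ (compare \eqref{eq:3termconvexity}--\eqref{eq:3terminequality} and \eqref{eq:lastarrangement}), and then settles even length by one application of $f(x)-f(y)\ge f(x-y)$ (Theorems \ref{eq:convexplusw} and \ref{eq:convexplusIC}), which is precisely your leftover estimate $\Delta_m\ge -f(0)$. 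In both treatments convexity enters only through the comparison of increments of equal length at ordered nonnegative base points, and $f(0)\le 0$ enters only at the boundary of the even case, exactly as you isolate it. What your version adds is a uniform closed-form bookkeeping for odd and even $m$ that makes the two boundary terms $\Delta_1=f(0)$ and $\Delta_m\ge -f(0)$ explicit; what it omits, relative to the paper, is the independent elementary verification that $x^r-y^r\ge(x-y)^r$ via the binomial theorem and the splitting $r=\lfloor r\rfloor+\{r\}$, which the paper uses to place $x^r$ in $\mathcal{W}$ without appealing to convexity. Your closing observation that the naive termwise reduction would demand subadditivity of a convex function, which fails, is consistent with the paper's counterexample $f(x)=e^{x}$.
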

\noindent  Weinberger mentioned that the problem was originally conjectured by L. E. Pyne and A. Weinstein. In addition to the proof of the inequality (\ref{eq:WinbergerInequality}), Weinberger provided some geometric significance to the inequality. A similar geometric application in the generalized theory of symmetrization of Payne and Weintein, who consider bodies of revolution in a fictitious space of non-integral dimensions, is cited in \cite{HW}. Some other authors treat inequalities  involving alternating sums like that of Weinberger's inequality. For example,
\begin{theorem}[\textbf{Szeg\"{o} } ]\cite{IO}
  Let $a_1 > a_2>...> a_{2m-1} > 0 $, and $f(x)$ is a convex function defined on $ [0,a_1]$. Then
\begin{equation}\label{eq:szegoinequality}
     \sum_{j=1}^{2m-1}(-1)^{j-1} f(a_j)\geq f\left[  \sum_{j=1}^{2m-1}(-1)^{j-1}a_{j} \right],
  \end{equation}
\end{theorem}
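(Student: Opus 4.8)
The plan is to argue by induction on $m$, where the induction step reduces an alternating sum of $2m-1$ terms to one of $2(m-1)-1$ terms by merging the first three summands $a_1,a_2,a_3$ into the single value $b:=a_1-a_2+a_3$. The one piece of convexity that makes this work is the following two-point inequality: if $p>q>s$ lie in $[0,a_1]$ and $t:=p-q+s$, then $t\in[s,p]$ (since $t\ge s$ and $t=p-(q-s)\le p$), and because $q+t=p+s$ one has $q=\lambda p+(1-\lambda)s$ and $t=(1-\lambda)p+\lambda s$ with the \emph{same} weight $\lambda=(q-s)/(p-s)\in(0,1)$; adding the two convexity estimates $f(q)\le \lambda f(p)+(1-\lambda)f(s)$ and $f(t)\le(1-\lambda)f(p)+\lambda f(s)$ yields
\[
f(p)-f(q)+f(s)\ \ge\ f(p-q+s).
\]
(This is Karamata's inequality for the majorization $(p,s)\succ(q,t)$, but the direct verification above uses no continuity of $f$, which is convenient since $f$ is only assumed convex on the closed interval $[0,a_1]$.)

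For the base case $m=1$ the claimed inequality is $f(a_1)\ge f(a_1)$. Assume it holds for admissible sequences of length $2m-3$, and let $a_1>a_2>\cdots>a_{2m-1}>0$ with $f$ convex on $[0,a_1]$ be given; set $b:=a_1-a_2+a_3$ and $S:=\sum_{j=1}^{2m-1}(-1)^{j-1}a_j$. Then $b=a_3+(a_1-a_2)>a_3>a_4$ and $b=a_1-(a_2-a_3)\le a_1$, so $b\in(0,a_1]$ and $b>a_4>a_5>\cdots>a_{2m-1}>0$ is again admissible, with $f$ convex on $[0,b]\subseteq[0,a_1]$. Applying the two-point inequality to $(p,q,s)=(a_1,a_2,a_3)$ (and leaving the tail untouched), then the induction hypothesis to $b,a_4,\dots,a_{2m-1}$, gives
\[
\sum_{j=1}^{2m-1}(-1)^{j-1}f(a_j)\ \ge\ f(b)-f(a_4)+f(a_5)-\cdots+f(a_{2m-1})\ \ge\ f\big(b-a_4+a_5-\cdots+a_{2m-1}\big)=f(S),
\]
since $b-a_4+a_5-\cdots+a_{2m-1}=a_1-a_2+a_3-a_4+\cdots+a_{2m-1}=S$. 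This closes the induction.

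Two comments on why this is enough and where the care lies. The argument $S$ automatically lies in $(0,a_1]$: writing $S=(a_1-a_2)+(a_3-a_4)+\cdots+a_{2m-1}>0$ and $S=a_1-(a_2-a_3)-\cdots-(a_{2m-2}-a_{2m-1})\le a_1$ shows $f(S)$ is defined, and the same bookkeeping — checking that the merge keeps the sequence strictly decreasing and inside $[0,a_1]$ — is really the only obstacle; this is precisely what the hypotheses $a_1>\cdots>a_{2m-1}>0$ buy. Also, in contrast with the even-term (Weinberger/Bellman) case, \emph{no} sign condition on $f(0)$ is needed here: since the number of terms is odd, $\sum_{j=1}^{2m-1}(-1)^{j-1}=1$, so affine functions satisfy the inequality with equality and the additive constant of $f$ drops out — visible already in the two-point lemma, where both sides evaluate $f$ on a set of signed count $1$. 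A less hands-on alternative would be to use the representation $f(x)=f(0)+cx+\int_{(0,a_1)}(x-t)_+\,d\mu(t)$ with $\mu\ge0$: the desired inequality is stable under nonnegative combinations and limits, holds with equality for the affine part, and for each kink $g_t(x)=(x-t)_+$ reduces, after cancelling the affine piece via $\sum(-1)^{j-1}=1$, to $\sum_{j=1}^{2m-1}(-1)^{j-1}(t-a_j)_+\ge0$ together with $\sum_{j=1}^{2m-1}(-1)^{j-1}(a_j-t)_+\ge0$, both of which hold because an alternating sum of a monotone nonnegative sequence with an odd number of terms is nonnegative. I expect the inductive argument to be the cleaner one to present in full.
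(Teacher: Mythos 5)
Your proof is correct and is essentially the paper's argument: both proceed by induction on $m$, with the induction engine being the three-term convexity inequality $f(p)-f(q)+f(s)\ge f(p-q+s)$ for $p\ge q\ge s\ge 0$. The only cosmetic differences are that you merge the first three terms $a_1,a_2,a_3$ into $b=a_1-a_2+a_3$ and recurse on the shortened sequence, whereas the paper merges the accumulated partial sum $S_{2k-1}$ with $a_{2k},a_{2k+1}$, and that you prove the lemma via two convex combinations sharing the weight $\lambda=(q-s)/(p-s)$ rather than via the paper's chord-slope comparison.
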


\begin{theorem}[\textbf{Bellman}]\cite{RB}
Let $f$ be a function satisfying the conditions:
\begin{equation}\label{eq:Bellmanscondition}
  f(0) \geq 0, \quad  f'(0) \geq 0, \quad  f'(x)   \text{ is monotone increasing },
\end{equation}
then
\begin{equation}\label{eq:generalizedinequality}
    \sum_{n=1}^{k}(-1)^{n-1} f(a_n)\geq f\left[  \sum_{n=1}^{k}(-1)^{n-1}a_{n} \right] .
\end{equation}

  provided that (\ref{eq:nonnegativedomaincondition}) is valid.
\end{theorem}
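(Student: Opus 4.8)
The plan is to organize the argument by the parity of $k$ and to lean on Szeg\"o's inequality (\ref{eq:szegoinequality}), which is already available. First I would record that the two analytic hypotheses in (\ref{eq:Bellmanscondition}), namely that $f'$ is monotone increasing and $f'(0)\ge 0$, make $f$ convex on $[0,a_1]$ and nondecreasing on $[0,\infty)$; convexity is precisely what Szeg\"o's inequality requires. Hence when $k=2m-1$ is odd, the desired inequality (\ref{eq:generalizedinequality}) is exactly (\ref{eq:szegoinequality}) applied to $a_1\ge\cdots\ge a_{2m-1}$, after a routine continuity argument relaxing Szeg\"o's strict chain $a_1>\cdots>a_{2m-1}>0$ to the non-strict one (\ref{eq:nonnegativedomaincondition}). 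In this odd case the value $f(0)$ never enters, which already signals that the constant term of $f$ becomes relevant only for even-length sums.

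For even $k=2p$ I would reduce to the odd case by peeling off the last term. Since removing the $2p$-th summand restores a $+a_{2p}$, one has $\sum_{n=1}^{2p-1}(-1)^{n-1}a_n=a_{2p}+S$, where $S:=\sum_{n=1}^{2p}(-1)^{n-1}a_n=\sum_{j=1}^{p}(a_{2j-1}-a_{2j})\ge 0$. Applying Szeg\"o's inequality to the first $2p-1$ terms gives $\sum_{n=1}^{2p-1}(-1)^{n-1}f(a_n)\ge f(S+a_{2p})$, hence $\sum_{n=1}^{2p}(-1)^{n-1}f(a_n)\ge f(S+a_{2p})-f(a_{2p})$. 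Thus the whole statement collapses to the single two-term inequality $f(x)-f(y)\ge f(x-y)$ with $x=S+a_{2p}\ge y=a_{2p}\ge 0$ and $x-y=S$; this is just the $k=2$ instance of (\ref{eq:generalizedinequality}), and $f'(0)\ge 0$ is what keeps all intermediate arguments inside $[0,a_1]$.

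To settle this two-term inequality I would use convexity directly: writing $y=\tfrac{y}{x}\,x+(1-\tfrac{y}{x})\,0$ and $x-y=\tfrac{x-y}{x}\,x+\tfrac{y}{x}\,0$ and summing the two convexity estimates yields $f(y)+f(x-y)\le f(x)+f(0)$, that is, $f(x)-f(y)\ge f(x-y)-f(0)$. The \emph{main obstacle} is then the residual term $-f(0)$: to reach the sought $f(x)-f(y)\ge f(x-y)$ one must control its sign, and this is exactly the step at which the normalization of $f$ at the origin prescribed in (\ref{eq:Bellmanscondition}) is brought to bear. I expect essentially all of the substance of the proof, together with the sharp contrast against the sign-free odd case furnished by Szeg\"o's inequality, to concentrate in this final step.
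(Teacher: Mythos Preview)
Your reduction is exactly the one the paper carries out: Szeg\"o's inequality handles odd $k$, and for even $k=2p$ one peels off $a_{2p}$ and is left with the two-term estimate $f(x)-f(y)\ge f(x-y)$ for $x\ge y\ge 0$. Your convexity computation $f(y)+f(x-y)\le f(x)+f(0)$ is also precisely the paper's Theorem~\ref{eq:convexplusIC}. So the architecture is right.

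The gap is in the very last move. From $f(x)-f(y)\ge f(x-y)-f(0)$ you need $-f(0)\ge 0$, i.e.\ $f(0)\le 0$. The hypothesis in (\ref{eq:Bellmanscondition}) that you propose to ``bring to bear'' says the opposite, $f(0)\ge 0$, and with that sign the step simply does not close. This is not a technicality that can be repaired: the paper does not prove the statement as written, because it is false. Section~2.3 gives the explicit counterexample $f(x)=e^x$, which satisfies all of (\ref{eq:Bellmanscondition}) yet violates (\ref{eq:generalizedinequality}) already for $k=2$ with $a_1=1$, $a_2=0.1$. The paper's point is that Olkin's correction $f(0)\le 0$ is the right hypothesis; with it your argument (and the paper's Theorems~\ref{eq:convexplusw} and~\ref{eq:convexplusIC}) goes through, and without it no argument can. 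Your write-up should make this explicit rather than leaving the sign issue as an ``obstacle'' to be resolved by (\ref{eq:Bellmanscondition}).
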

\noindent Ingram Olkin \cite{IO} cites the conditions (\ref{eq:Bellmanscondition}) used by Bellman \cite{RB} as:
\begin{equation}\label{eq:Ingramsconditions}
   f   \text{ is a convex function  defined on } [0,a_1],\quad   f(0) \leq 0.
 \end{equation}
It is true that a differentiable function is convex if and only if its derivative is  monotone increasing. See, for example, \cite{KG}. The condition  $f(0) \leq 0 $, which corrects Bellman's result,  was introduced by Olkin  \cite{IO}. That one used by Bellman was $f(0) \geq 0 $. The inequality by Szeg\"{o} considers  general convex functions, which includes the Weinberger's $f(x)= x^r , r > 1 $. While both Weinberger and Szeg\"{o} consider convex functions, Szeg\"{o}'s  does not treat inequalities involving summations with even number terms. For an even number of terms, the inequality  may work for some convex functions, for example, $ f(x)= x^2 + x^4 +x^6 $ but not for others, for example, $f(x)= e^x $. To summarize, Szeg\"{o}'s work, earlier than Weinbergers's result, considers general class of convex functions with inequalities involving an odd number of terms of summation, while Weinberger's result treats a lesser class of convex functions namely, $f(x)= x^r , r> 1 $ without restriction on the number of terms in the summations in the inequalities. Weinberger's inequality works for both those summations with even, and odd numbers of terms.  For Bellman's generalization, by the fact that differentiable functions with monotone increasing derivatives are convex, the generalization seems to be similar to that of  Szeg\"{o}'s result, except that two additional conditions  $f(0), f'(0) \geq 0 $ are included by Bellman. We shall give a counter example on how differentiable convex function with these two conditions at $x=0$ fails to meet the generalized inequality proposed by Bellman.  Olkin claims a generalized inequality that incorporates the works of Szeg\"{o}, Weinberger, and Bellman. See  \cite{IO}. Latter,  Bellman \cite{RB1} shows  that Olkin's result in \cite{IO}  is a special case of an inequality due to Steffensen \cite{ST}. While Bellman's generalization with the correct condition
$f(0) \leq 0 $ by Olkin is true, the mathematical analyses of the result were not displayed. Belman uses geometric consideration.
 \noindent In this paper, we introduce a set $\mathcal{W}$  that contains  the set of all functions of the form $ f(x)=x^r,\, r> 1$ that were used by Weinberger as a subset.  While not all convex functions are in the class $\mathcal{W}$, those convex functions in the class $\mathcal{W}$ play an important role in the generalization of Weiberger's  inequality. One of the important properties of the function  $ f \in \mathcal{W}$ is that $f(0)\leq  0 $, and  the only convex functions in the class  $ f \in \mathcal{W}$  are the ones with the property $f(0) \leq 0 $. The current work treats the following  issues:
\begin{itemize}
  \item  Szeg\"{o} inequality \cite{GS} was cited by  Olkin \cite{IO}. Here we present a proof of Szeg\"{o} inequality with a simple procedure that only takes into account the convexity of the function $f$.
  \item We introduce the class $ \mathcal{W}$ of functions such that the set of all functions that are used in the generalization of the inequality are those which belong to
      $$ \{f:  f \text{ is convex}\} \cap \mathcal{W}  . $$
      There are convex function  that are not in the class $ \mathcal{W}$, and functions in the class $ \mathcal{W}$ that are not convex. We show that
      $$ \{f:  f \text{ is convex }\} \cap \mathcal{W} = \{f:  f \text{ is convex},\quad f(0)\leq 0 \}   $$


  \item The condition $f(0) \leq 0 $ is very important for the generalizations of Weinberger inequality. This condition was wrongly written as $f(0) \geq 0 $ by Bellman and was corrected by Olkin as to be $f(0) \leq 0 $. However Olkin has given a counterexample where the condition  $f(0) \geq 0 $ set by Bellman fails but did not prove the  necessity of the condition  $f(0) \leq 0 $. This condition is proved in the current paper.

\item The condition that  $f'(x)$ is monotone increasing in Bellman treats those differentiable convex functions but the current result involves any convex function, like the absolute value, the maximum of linear functions, etc, as long as they satisfy the condition $f(0) \leq 0 $.
\end{itemize}

\section{A generalization of Weinberger's inequality with alternating terms}
\subsection{The class of functions $\mathcal{ W }$}
We introduce the class $\mathcal{ W}$ of functions defined as follows:
\begin{equation}\label{eq:generalclass}
 \mathcal{ W}:= \{f : [0,\infty) \rightarrow  \mathbb{R} \, \},
\end{equation}
satisfying the following two equivalent conditions:
\begin{equation}\label{eq:newcondition1}
  f(x)-f(y) \geq f(x-y) ,
\end{equation}
\begin{equation}\label{eq:newcondition2}
   f(x+y)\geq f(x)+f(y) ,
\end{equation}
for any $ x \geq y \geq 0 $.

\begin{theorem}
  The conditions (\ref{eq:newcondition1}) and (\ref{eq:newcondition2}) are equivalent.
\end{theorem}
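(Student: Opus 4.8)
The approach is to establish the two implications by direct substitution, using that the right-hand side of (\ref{eq:newcondition2}) is symmetric in its two summands. No nontrivial machinery is needed; the whole statement reduces to choosing the right arguments in each hypothesis.

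For (\ref{eq:newcondition1}) $\Rightarrow$ (\ref{eq:newcondition2}): given $x \geq y \geq 0$, I would apply (\ref{eq:newcondition1}) to the admissible pair $(x+y,\; x)$ — admissible because $x+y \geq x \geq 0$. This gives $f(x+y) - f(x) \geq f\big((x+y)-x\big) = f(y)$, which is exactly (\ref{eq:newcondition2}) after transposing $f(x)$.

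For (\ref{eq:newcondition2}) $\Rightarrow$ (\ref{eq:newcondition1}): given $x \geq y \geq 0$, set $u = x-y \geq 0$ and $v = y \geq 0$, so $u+v = x$. Observing that the conclusion of (\ref{eq:newcondition2}) is symmetric in $u$ and $v$ — hence valid for every unordered pair of nonnegative arguments, not just those listed in decreasing order — I would apply it to $u$ and $v$ to obtain $f(x) = f(u+v) \geq f(u)+f(v) = f(x-y)+f(y)$, which rearranges to (\ref{eq:newcondition1}).

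The only point demanding a word of care — and the closest thing to an ``obstacle'' — is this symmetry remark in the second implication: a reader might worry that (\ref{eq:newcondition2}) was only postulated for $x \geq y$, whereas the substitution $u = x-y$, $v = y$ can violate $u \geq v$. Since $f(x)+f(y) = f(y)+f(x)$, the hypothesis (\ref{eq:newcondition2}) automatically extends to all pairs in $[0,\infty)^2$, so the substitution is legitimate. Everything else is a one-line rearrangement, so I would keep the write-up short.
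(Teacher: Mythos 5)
Your proposal is correct and follows essentially the same route as the paper: each implication is obtained by a direct substitution, applying (\ref{eq:newcondition1}) to the pair $(x+y,\,x)$ (the paper uses $(x+y,\,y)$, an immaterial variation) and applying (\ref{eq:newcondition2}) to the decomposition $x=(x-y)+y$. Your explicit remark that (\ref{eq:newcondition2}) extends by symmetry to unordered pairs is a point the paper's proof silently relies on, so it is a welcome clarification rather than a deviation.
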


\begin{proof}
 Let $x\geq y \geq 0 $. Suppose that $f(x+y) \geq f(x)+f(y)$. Then
  $$ f(x)= f((x-y)+ y) \geq f(x-y) + f(y) .$$
  from which we get $f(x) -f(y) \geq f(x)-f(y) $. On the other hand, if we assume that $f(x)-f(y) \geq f(x-y) $, then  $f(x+y)-f(y) \geq f(x) $. So $f(x+y) \geq f(x)+ f(y)$. This proves the Theorem.
\end{proof}

\begin{theorem}
The functions $f(x)=x^r, r> 1 $ in the Weinberger's result are in the class $\mathcal{W}$.
\end{theorem}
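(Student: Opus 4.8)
The plan is to invoke the equivalence proved in the preceding theorem, so that it suffices to verify the superadditive form (\ref{eq:newcondition2}) for $f(x)=x^r$, $r>1$; that is, I would establish
\[
(x+y)^r \;\ge\; x^r + y^r \qquad \text{for all } x \ge y \ge 0 .
\]
First I would dispose of the degenerate case $x=y=0$, where both sides vanish. For $x+y>0$, I would normalize: set $t=\dfrac{x}{x+y}\in[0,1]$, so that $1-t=\dfrac{y}{x+y}\in[0,1]$. Since $r>1$ and $0\le t\le 1$, we have $t^r\le t$ and $(1-t)^r\le 1-t$; adding these gives $t^r+(1-t)^r\le 1$. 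Multiplying through by $(x+y)^r>0$ and using the homogeneity identity $(x+y)^r t^r = x^r$ and $(x+y)^r(1-t)^r=y^r$ yields $x^r+y^r\le (x+y)^r$, which is exactly (\ref{eq:newcondition2}). By the equivalence theorem, $f(x)=x^r$ also satisfies (\ref{eq:newcondition1}), hence $f\in\mathcal{W}$.

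Alternatively, I would give a one‑line calculus argument: fixing $y\ge 0$ and putting $g(x)=(x+y)^r-x^r-y^r$, we have $g(0)=0$ and $g'(x)=r\big[(x+y)^{r-1}-x^{r-1}\big]\ge 0$ because $r-1>0$ makes $u\mapsto u^{r-1}$ nondecreasing on $[0,\infty)$ and $x+y\ge x$; hence $g$ is nondecreasing and $g(x)\ge g(0)=0$. Either route works, and I would remark that the ordering hypothesis $x\ge y$ in the definition of $\mathcal{W}$ is not actually needed here, since the inequality $(x+y)^r\ge x^r+y^r$ is symmetric in $x$ and $y$; moreover $r>1$ (rather than merely $r=1$) is precisely what makes the inequalities $t^r\le t$, $(1-t)^r\le 1-t$ hold with the strict sign when $0<t<1$, so the superadditivity is strict whenever $x,y>0$.

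I do not expect any genuine obstacle: the statement is the classical superadditivity of the power function and both arguments are elementary. The only points requiring care are the bookkeeping of the degenerate cases (some $a_s$ or the normalizing denominator equal to zero) and making explicit that $r>1$ forces $r-1>0$, which is the hypothesis actually used. Accordingly I would keep the proof short, citing the equivalence theorem to reduce to (\ref{eq:newcondition2}) and then giving the normalization computation above.
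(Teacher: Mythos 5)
Your proof is correct. It follows the route that the paper itself only sketches as an ``alternative'': reduce, via the equivalence theorem, to the superadditivity condition (\ref{eq:newcondition2}) and verify $(x+y)^r\ge x^r+y^r$. The paper's alternative does this by factoring $(x+y)^r=x^r(1+y/x)^r\ge x^r(1+y/x)=x^r+yx^{r-1}\ge x^r+y^r$, which needs the ordering $x\ge y$ in the last step; your normalization $t=x/(x+y)$ with $t^r\le t$ and $(1-t)^r\le 1-t$ is the same underlying fact ($u\mapsto u^r$ versus $u$ on $[0,1]$) packaged symmetrically, so the hypothesis $x\ge y$ becomes visibly unnecessary --- a small but genuine gain in cleanliness, as is your observation about strictness for $x,y>0$. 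The paper's \emph{primary} argument is quite different and more laborious: it proves the subtractive form (\ref{eq:newcondition1}), $x^r-y^r\ge(x-y)^r$, directly, using the binomial theorem for integer $r$ and then a decomposition $r=\lfloor r\rfloor+\{r\}$ with a telescoping estimate for non-integer $r$. Your approach avoids that case split entirely. The only stylistic caveat is that you should make sure the equivalence theorem is actually applicable as stated (it is proved for $x\ge y\ge 0$, which is the regime in the definition of $\mathcal{W}$), but since your superadditivity inequality holds for all $x,y\ge 0$ this is not an issue.
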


\begin{proof}
We show the condition in (\ref{eq:newcondition1}) holds true by proving the inequality :
  \begin{equation}\label{eq:powerinequality}
   x^r-y^r \geq (x-y)^r, \quad x \geq y \geq  0, \quad r > 1 .
  \end{equation}
  If $r \in \mathbb{N } $, then by binomial theorem considering the terms with $s=0$ and $s=r$, we have
  $$ x^r= ((x-y)+y )^r = \sum_{s=0}^{r}\binom{r}{s}(x-y)^sy^{r-s} \geq (x-y)^r +y^r ,  $$
  Consequently, inequality (\ref{eq:powerinequality}) follows. To complete the proof, we have to consider the case where $r>1$ is a non integer. We can write $r$ as sum of  the integer part $\lfloor r \rfloor $ and the fraction part $\{ r \}  $ as follows:
  $$  r= \lfloor r \rfloor + \{ r \} , $$
  where $\lfloor r \rfloor \in \mathbb{N}$ when $ r \geq  1 $, and $ 0 \leq     \{ r \} <  1  $. Then,
   \begin{align*}
     x^r-y^r &= x^{\lfloor r\rfloor}x^{\{ r \} } - y^{\lfloor r\rfloor}y^{\{ r \} }\\
      & = \left(x^{\lfloor r\rfloor}- y^{\lfloor r\rfloor} \right )x^{\{ r \} } + y^{\lfloor r\rfloor}\left( x^{\{ r \} }-y^{\{ r \} }  \right) \\
      & \geq  \left(x^{\lfloor r\rfloor}- y^{\lfloor r\rfloor} \right )(x-y)^{\{ r \} } + y^{\lfloor r\rfloor}\left( x^{\{ r \} }-y^{\{ r \} }  \right) \\
      & \geq  \left(x- y \right )^{\lfloor r\rfloor} (x-y)^{\{ r \} } + y^{\lfloor r\rfloor}\left( x^{\{ r \} }-y^{\{ r \} }  \right) \\
      & = (x-y)^r +  y^{\lfloor r\rfloor}\left( x^{\{ r \} }-y^{\{ r \} }  \right) \\
      & \geq (x-y)^r.
     \end{align*}
    Alternatively, we  may show the condition (\ref{eq:newcondition2}) by showing:
   \begin{equation}\label{eq:sumexpansion}
  (x+y)^r  \geq x^r +y^r, \quad x >0,\quad y > 0, \quad r \geq 1 .
  \end{equation}
Since the case $x = y= 0 $ is trivial, let $x>0$. Then
     $$  (x+y)^r =x^r (1+y/x)^r \geq x^r(1+y/x) = x^r +yx^{r-1} \geq x^r +y^r. $$
This completes the proof.
\end{proof}

\begin{theorem}
  Let $f, g \in \mathcal{ W} $, and let $\alpha > 0 $. Then $  \alpha f, f+g, fg,  \in \mathcal{ W} $.
\end{theorem}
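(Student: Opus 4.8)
The plan is to test each of the three claimed memberships against the superadditive form (\ref{eq:newcondition2}), $h(x+y)\ge h(x)+h(y)$ for $x,y\ge 0$, which by the equivalence established above is exactly the condition $h\in\mathcal{W}$; since (\ref{eq:newcondition2}) is symmetric in $x$ and $y$ there is no loss in dropping the ordering $x\ge y$ while verifying it.

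The cases $\alpha f$ and $f+g$ are immediate. For $\alpha f$ with $\alpha>0$, multiply $f(x+y)\ge f(x)+f(y)$ by the positive constant $\alpha$; the direction of the inequality is preserved, so $(\alpha f)(x+y)\ge(\alpha f)(x)+(\alpha f)(y)$. For $f+g$, add the inequalities $f(x+y)\ge f(x)+f(y)$ and $g(x+y)\ge g(x)+g(y)$ termwise to obtain $(f+g)(x+y)\ge(f+g)(x)+(f+g)(y)$. Neither step uses anything beyond (\ref{eq:newcondition2}).

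The product $fg$ is the one delicate point, and the step I expect to be the real obstacle. The natural route is the chain
\[
f(x+y)\,g(x+y)\ \ge\ \bigl(f(x)+f(y)\bigr)g(x+y)\ \ge\ \bigl(f(x)+f(y)\bigr)\bigl(g(x)+g(y)\bigr)\ \ge\ f(x)g(x)+f(y)g(y),
\]
where the first inequality uses superadditivity of $f$ together with $g(x+y)\ge 0$, the second uses superadditivity of $g$ together with $f(x)+f(y)\ge 0$, and the last discards the nonnegative cross terms $f(x)g(y)+f(y)g(x)$. Each step therefore needs sign information, so the argument closes cleanly precisely when $f$ and $g$ are nonnegative on $[0,\infty)$ — which is exactly the situation for the motivating members $x^{r}$, $r\ge 1$, of $\mathcal{W}$ and for the combinations built from them. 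Hence the heart of the proof is to record that the product statement is to be read for nonnegative $f,g\in\mathcal{W}$; this positivity is genuinely needed, since $f(x)=x-1$ lies in $\mathcal{W}$ while $(x-1)^{2}$ takes the value $1>0$ at $x=0$ and therefore violates (\ref{eq:newcondition2}). Granting nonnegativity, the displayed chain (or its translation into the difference form (\ref{eq:newcondition1}), via $f(x)g(x)-f(y)g(y)=g(x)\bigl(f(x)-f(y)\bigr)+f(y)\bigl(g(x)-g(y)\bigr)$) completes the argument. As a byproduct, combining this closure under positive scaling, sums, and products with the fact that $x^{r}\in\mathcal{W}$ for $r\ge 1$ shows that every finite combination $\sum_{k}c_{k}x^{r_{k}}$ with $c_{k}>0$ and $r_{k}\ge 1$ — in particular every polynomial with nonnegative coefficients and zero constant term — belongs to $\mathcal{W}$.
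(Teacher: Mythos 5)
Your treatment of $\alpha f$ and $f+g$ is exactly the paper's: multiply, respectively add, the superadditivity inequality (\ref{eq:newcondition2}). For the product, the paper writes the single chain
\[
(fg)(x+y)=f(x+y)g(x+y)\ \geq\ \bigl(f(x)+f(y)\bigr)\bigl(g(x)+g(y)\bigr)\ \geq\ (fg)(x)+(fg)(y),
\]
which is the same chain you propose, except that the paper gives no justification for either inequality. You have correctly identified that both steps need sign information: multiplying $f(x+y)\geq f(x)+f(y)$ and $g(x+y)\geq g(x)+g(y)$ to get the first inequality is only legitimate when the relevant factors are nonnegative, and discarding the cross terms $f(x)g(y)+f(y)g(x)$ in the second requires them to be nonnegative as well. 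Moreover, your counterexample is valid and shows the product claim is genuinely false as stated: $f(x)=x-1$ satisfies $f(x+y)=x+y-1\geq x+y-2=f(x)+f(y)$, so $f\in\mathcal{W}$, yet $(f\cdot f)(0)=1>0$, which contradicts the property $h(0)\leq 0$ that the paper itself proves for every $h\in\mathcal{W}$ (Theorem \ref{eq:nonnegativity}). So the theorem's product assertion requires the additional hypothesis that $f$ and $g$ are nonnegative (as the paper does impose, for the same reason, in its later theorem on compositions), and under that hypothesis your chain closes correctly. In short: your proof agrees with the paper's where the paper is right, and repairs it where the paper is wrong; the nonnegativity hypothesis you add is not a blemish in your argument but a necessary correction to the statement.
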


\begin{proof}
 As the consequence of the condition (\ref{eq:newcondition2})  of functions in the class $\mathcal{ W}$, we have the following inequalities
 $$ (\alpha f) (x+y) = \alpha f(x+y) \geq \alpha (f(x)+f(y)) =(\alpha f)(x) +  (\alpha f)(y).     $$
 $$ (f+g)(x+y) =f(x+y)+g(x+y) \geq f(x)+f(y)+g(x)+g(y)= (f+g)(x)+(f+g)(y), $$
 $$ (fg)(x+y) =f(x+y)g(x+y) \geq (f(x)+f(y))(g(x)+g(y))\geq  (fg)(x)+(fg)(y).    $$
\end{proof}

\begin{theorem}\label{eq:generalfun}
  $ \mathcal{W} $ contains  all function of the form
  \begin{equation}\label{eq:generalfunctionform}
     f(x)=  \sum_{i=1 }^{m} a_{i} x^{r_i},
  \end{equation}
  where $a_i > 0,r_i > 1, \,  i=1,2,...,m $.
\end{theorem}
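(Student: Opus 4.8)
The plan is to derive this immediately as a corollary of the two preceding theorems, namely that each power function $x \mapsto x^{r}$ with $r>1$ lies in $\mathcal{W}$, and that $\mathcal{W}$ is closed under multiplication by positive scalars and under finite sums. First I would observe that for each fixed index $i$, the map $x\mapsto x^{r_i}$ belongs to $\mathcal{W}$ since $r_i>1$; then, since $a_i>0$, the scaled function $x\mapsto a_i x^{r_i}$ is again in $\mathcal{W}$ by the closure of $\mathcal{W}$ under multiplication by a positive constant.

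Next I would invoke closure under addition, applied inductively on $m$. The base case $m=1$ is exactly the previous step. For the inductive step, writing $f(x)=\bigl(\sum_{i=1}^{m-1}a_i x^{r_i}\bigr)+a_m x^{r_m}$, the first summand is in $\mathcal{W}$ by the inductive hypothesis and the second is in $\mathcal{W}$ by the base case, so their sum is in $\mathcal{W}$ because $\mathcal{W}$ is closed under $f+g$. This establishes the claim for all $m\in\mathbb{N}$.

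Alternatively, and perhaps more transparently for the reader, I would note that one can verify condition (\ref{eq:newcondition2}) directly: for $x\geq y\geq 0$ and each $i$ one has $(x+y)^{r_i}\geq x^{r_i}+y^{r_i}$ by inequality (\ref{eq:sumexpansion}), and multiplying by $a_i>0$ and summing over $i=1,\dots,m$ yields $f(x+y)\geq f(x)+f(y)$, so $f\in\mathcal{W}$. I do not anticipate any genuine obstacle here; the only point requiring a word of care is that the closure properties are stated for two functions and must be promoted to an arbitrary finite family by induction, which the argument above handles. I would also remark in passing that such $f$ satisfies $f(0)=0\leq 0$, consistent with the characterization of convex members of $\mathcal{W}$ announced in the introduction.
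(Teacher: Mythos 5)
Your argument is correct and is precisely the route the paper intends: the theorem is stated as an immediate consequence of the two preceding results (membership of $x^{r}$, $r>1$, in $\mathcal{W}$ and closure of $\mathcal{W}$ under positive scalar multiples and sums), with the finite-sum case handled by the obvious induction on $m$. Your direct verification via condition (\ref{eq:newcondition2}) is a fine alternative but adds nothing essential.
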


\begin{theorem}
  Let $$  f(x)= \sum_{i=0}^{\infty}  c_i f_i(x),\quad c_i \geq 0, \quad  f_i(x) \in \mathcal{W} ,      $$
  is an infinite series of functions convergent on some interval of the form $[0, a]$. Then $f \in\mathcal{ W} $.
\end{theorem}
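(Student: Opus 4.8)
The plan is to reduce the infinite-series statement to the finite-sum case that has already been handled. For each $N$ put $S_N(x) = \sum_{i=0}^{N} c_i f_i(x)$ for the $N$-th partial sum. Since every $f_i \in \mathcal{W}$ and every $c_i \geq 0$, the earlier closure theorem (stability of $\mathcal{W}$ under $\alpha f$ with $\alpha>0$ and under $f+g$) gives, by a straightforward induction on $N$, that $S_N \in \mathcal{W}$ for each $N$; the base case $c_0 f_0 \in \mathcal{W}$ is the scaling statement, and the inductive step $S_{N} = S_{N-1} + c_N f_N \in \mathcal{W}$ is the additivity statement. Note we may absorb a vanishing $c_i$ trivially, so no positivity of the $c_i$ beyond $c_i\ge 0$ is needed.

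Next I would fix $x \geq y \geq 0$ such that $x+y$ also lies in the interval $[0,a]$ on which the series converges (equivalently, restrict attention to pairs with $0 \le y \le x$ and $x+y \le a$). Applying condition (\ref{eq:newcondition2}) to $S_N \in \mathcal{W}$ yields $S_N(x+y) \geq S_N(x) + S_N(y)$ for every $N$. Letting $N \to \infty$, the hypothesis that the series converges at each of the three points $x$, $y$, $x+y$ shows that the left-hand side converges to $f(x+y)$ and the right-hand side converges to $f(x)+f(y)$. Since a non-strict inequality is preserved under passage to the limit, we obtain $f(x+y) \geq f(x)+f(y)$, i.e. $f$ satisfies (\ref{eq:newcondition2}); equivalently, by the equivalence theorem, $f$ satisfies (\ref{eq:newcondition1}), so $f \in \mathcal{W}$.

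The only real point needing care is the domain bookkeeping: condition (\ref{eq:newcondition2}) refers to the value $f(x+y)$, so one should only assert it for pairs whose sum still lies in the interval of convergence; this is a harmless restriction and, after regarding $f$ as defined on that interval (or on all of $[0,\infty)$ if the series converges there), does not affect the conclusion $f \in \mathcal{W}$. Apart from this, there is no genuine obstacle — the argument is simply that the defining inequality of $\mathcal{W}$ is closed under pointwise limits, combined with the already-established fact that $\mathcal{W}$ is closed under finite nonnegative linear combinations. If one wished to avoid even the domain subtlety, one could instead assume uniform convergence on $[0,a]$, but pointwise convergence at the relevant triples is all that the proof requires.
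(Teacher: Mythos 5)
Your argument is correct and is essentially the paper's: the paper likewise applies the defining inequality of $\mathcal{W}$ termwise and passes to the limit of the series, the only cosmetic difference being that it works directly with the difference form (\ref{eq:newcondition1}), $f_i(x)-f_i(y)\geq f_i(x-y)$ for $a\geq x\geq y\geq 0$, so the point $x-y$ automatically stays in $[0,a]$ and the domain caveat you raise about $x+y$ never arises. Your detour through the partial sums and the closure theorem is just a slightly more explicit formalization of the same limiting step.
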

\begin{proof}
  For $ a  \geq     x    \geq y  \geq 0 $, we have,
  $$   f(x)-f(y) = \sum_{i=0}^{\infty}  c_i ( f_i(x)-f_i(y)) \geq \sum_{i=0}^{\infty}  c_i  f_i(x-y)=  f(x-y))  $$
\end{proof}

\begin{example}

\end{example}
The  function $f(x)= e^x-x-1$ which has a Taylors series representation
  $$ f(x)= e^x-x-1 = \sum_{k=2}^{\infty}\frac{x^k}{k!}, $$
  that converges to $f(x)$ for every $x \in \mathbb{R }$,  has each of the terms $x^k /k!$ in the class $\mathcal{W}$. So $ f \in \mathcal{W} $.


\begin{theorem}\label{eq:nonnegativity}
  For any $f \in \mathcal{W} $, we have the following properties:
  \begin{itemize}
    \item $f(0)\leq 0$
    \item If $f$ is non negative then $f$ is nondecreasing.
  \end{itemize}
  \end{theorem}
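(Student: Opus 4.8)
Both conclusions follow by feeding suitable special values into the defining inequalities of $\mathcal{W}$, so the work is almost entirely bookkeeping; there is no serious obstacle, only the need to pick the right specialization.

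For the first bullet, the plan is to invoke condition (\ref{eq:newcondition1}) with $x=y$: since any $x\geq 0$ satisfies $x\geq x\geq 0$, we get $f(x)-f(x)\geq f(x-x)$, i.e. $0\geq f(0)$. (Equivalently one could take $x=y=0$ in (\ref{eq:newcondition2}) to obtain $f(0)\geq 2f(0)$, hence $f(0)\leq 0$; I would mention this as an alternative but carry out the first.) Note this step only uses the hypothesis at an allowed ordered pair, so the restriction $x\geq y\geq 0$ in the definition is not an issue.

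For the second bullet, assume $f\geq 0$ on $[0,\infty)$ and fix $x\geq y\geq 0$. Applying (\ref{eq:newcondition1}) gives $f(x)-f(y)\geq f(x-y)$, and since $x-y\geq 0$ lies in the domain where $f$ is non-negative, the right-hand side is $\geq 0$. Hence $f(x)\geq f(y)$, which is exactly the statement that $f$ is nondecreasing. The only point to be careful about is that the inequality (\ref{eq:newcondition1}) is available precisely for the ordered pair $(x,y)$ we have chosen, so no further case analysis is required.

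The expected main ``obstacle'' is therefore nil: the proof is a two-line specialization argument for each part, and the only thing to watch is that every application of (\ref{eq:newcondition1}) or (\ref{eq:newcondition2}) is made at arguments satisfying the ordering constraint built into the definition of $\mathcal{W}$.
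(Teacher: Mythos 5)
Your proposal is correct and follows essentially the same route as the paper: setting $x=y$ in (\ref{eq:newcondition1}) for the first bullet, and for the second bullet using the defining inequality together with $f(x-y)\geq 0$ (the paper phrases this via (\ref{eq:newcondition2}) as $f(x)\geq f(x-y)+f(y)\geq f(y)$, which is the same one-line argument in rearranged form). No gaps.
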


  \begin{proof}
  \begin{itemize}
    \item By condition (\ref{eq:newcondition1}), we have
    $$f(0)= f(x-x)\leq f(x)-f(x)=0. $$

    \item  Let $x \geq y \geq 0 $. By condition (\ref{eq:newcondition2}) and nonnegativity of $f$ we have
      $$f(x)= f((x-y)+y)\geq f(x-y) +f(y) \geq f(y).$$
     Therefore $f$  is nondecreasing.
  \end{itemize}
\end{proof}

\begin{remark}
   By Theorem \ref{eq:nonnegativity}, if $f$ is a non negative  differentiable function in the class $\mathcal{W}$, then $f'(x) \geq 0 $. However, not all functions in the class $\mathcal{W}$ are differentiable.
\end{remark}

\begin{theorem}
  The identity function $f(x)=x$  is in the class $\mathcal{W}$ with equality holds in conditions (\ref{eq:newcondition1}) and (\ref{eq:newcondition2}). The constant function $f(x)= c $ is in $\mathcal{W}$ if and only if $c \leq  0 $.
\end{theorem}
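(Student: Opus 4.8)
The plan is to verify the defining inequalities directly, using the equivalence of (\ref{eq:newcondition1}) and (\ref{eq:newcondition2}) already established so that only one of them needs checking in each case.

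For the identity function $f(x)=x$, I would simply substitute into (\ref{eq:newcondition1}): for $x\ge y\ge 0$ we get $f(x)-f(y)=x-y=f(x-y)$, so the inequality holds as an equality. Likewise, substituting into (\ref{eq:newcondition2}) gives $f(x+y)=x+y=f(x)+f(y)$, again with equality. Since equality in particular satisfies $\ge$, this shows $f(x)=x\in\mathcal{W}$ and records that both defining conditions hold with equality.

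For the constant function $f(x)=c$, I would argue both directions. Necessity: if $f\in\mathcal{W}$, then by the first bullet of Theorem \ref{eq:nonnegativity} we have $f(0)\le 0$, i.e. $c\le 0$; alternatively one can read this straight off (\ref{eq:newcondition2}), which for the constant function says $c\ge c+c$, i.e. $c\le 0$. Sufficiency: if $c\le 0$, then for all $x\ge y\ge 0$ the inequality $f(x+y)=c\ge 2c=f(x)+f(y)$ holds precisely because $-c\ge 0$; hence (\ref{eq:newcondition2}) is satisfied, and by the equivalence theorem (\ref{eq:newcondition1}) is satisfied as well, so $f\in\mathcal{W}$.

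There is no real obstacle here; the statement is a direct sanity check on the definition of $\mathcal{W}$. The only point requiring a little care is to phrase the constant-function argument as a biconditional and to note that verifying (\ref{eq:newcondition2}) alone suffices, by the earlier equivalence, rather than checking (\ref{eq:newcondition1}) separately.
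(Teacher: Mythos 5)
Your proof is correct; the paper actually states this theorem without supplying any proof, and your direct verification (equality for the identity, and the biconditional $c\ge 2c \iff c\le 0$ for constants, with the equivalence of (\ref{eq:newcondition1}) and (\ref{eq:newcondition2}) letting you check only one condition) is exactly the argument that is needed. Nothing is missing.
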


\begin{theorem}
  The floor function $f(x)= \lfloor x \rfloor $ is in the class $\mathcal{W}$.
\end{theorem}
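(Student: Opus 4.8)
The plan is to verify the superadditivity condition (\ref{eq:newcondition2}) for $f(x)=\lfloor x\rfloor$, namely $\lfloor x+y\rfloor\ge\lfloor x\rfloor+\lfloor y\rfloor$ for $x\ge y\ge 0$; the equivalence theorem proved above then immediately places $f$ in $\mathcal{W}$. (In fact this inequality holds for all real $x,y$, so the restriction $x\ge y\ge 0$ is not even used here.)

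The key step is the standard decomposition of a real number into its integer and fractional parts. Write $x=\lfloor x\rfloor+\{x\}$ and $y=\lfloor y\rfloor+\{y\}$ with $0\le\{x\}<1$ and $0\le\{y\}<1$. Then
$$ x+y=(\lfloor x\rfloor+\lfloor y\rfloor)+(\{x\}+\{y\}), $$
where $\lfloor x\rfloor+\lfloor y\rfloor\in\mathbb{Z}$ and $0\le\{x\}+\{y\}<2$. Since adding an integer commutes with taking the floor, $\lfloor x+y\rfloor=\lfloor x\rfloor+\lfloor y\rfloor+\lfloor\{x\}+\{y\}\rfloor$. Now $\{x\}+\{y\}\ge 0$ forces $\lfloor\{x\}+\{y\}\rfloor\in\{0,1\}$, in particular $\lfloor\{x\}+\{y\}\rfloor\ge 0$, whence
$$ \lfloor x+y\rfloor\ \ge\ \lfloor x\rfloor+\lfloor y\rfloor, $$
which is exactly (\ref{eq:newcondition2}).

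There is no serious obstacle; the only point to watch is that the term $\lfloor\{x\}+\{y\}\rfloor$ must not be discarded carelessly — it can equal $1$ (for instance $x=y=0.5$ gives $\lfloor 1\rfloor=1>0=\lfloor 0.5\rfloor+\lfloor 0.5\rfloor$), showing the relation is genuinely a strict inequality in general, but it is always nonnegative, which is all that is needed. As a consistency check, $f(0)=\lfloor 0\rfloor=0\le 0$ in agreement with Theorem \ref{eq:nonnegativity}, and $\lfloor x\rfloor\ge 0$ is nondecreasing on $[0,\infty)$, again consistent with that theorem. Alternatively, one could verify the equivalent form (\ref{eq:newcondition1}), $\lfloor x\rfloor-\lfloor y\rfloor\ge\lfloor x-y\rfloor$, directly by applying the same integer/fractional-part splitting to $x-y$.
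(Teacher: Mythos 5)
Your proof is correct and takes essentially the same route as the paper: both verify the superadditivity condition (\ref{eq:newcondition2}) by elementary properties of the floor function. The paper argues slightly more tersely---from $\lfloor x\rfloor+\lfloor y\rfloor\le x+y$ it applies monotonicity of the floor together with the fact that the floor fixes integers---whereas you use the integer/fractional-part decomposition, but the content is identical.
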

\begin{proof}
  For the floor function, we have the following property
  $$ \lfloor x \rfloor  +  \lfloor  y \rfloor  \leq x+y.                  $$
  Consequently,
$$  \lfloor x \rfloor  +  \lfloor  y \rfloor =   \lfloor   \lfloor x \rfloor  +  \lfloor  y \rfloor \rfloor   \leq \lfloor x+y \rfloor. $$
\end{proof}
\begin{remark}
  The floor function $f(x)= \lfloor x \rfloor $ is an example of a function that not convex, not differentiable, and not continuous. It is nondecreasing, and satisfies the condition $f(0)=0 .$
\end{remark}

\begin{theorem}
  Let $f, g \in \mathcal{W }$ are nonnegative. Then the composition $ f \circ g \in \mathcal{W} $.
\end{theorem}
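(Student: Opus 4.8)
The plan is to verify condition (\ref{eq:newcondition2}) directly for $h := f\circ g$, using the fact (Theorem \ref{eq:nonnegativity}) that a nonnegative member of $\mathcal{W}$ is nondecreasing. The first thing to record is that $h$ is a bona fide function $[0,\infty)\to\mathbb{R}$: since $g\in\mathcal{W}$ is nonnegative it maps $[0,\infty)$ into $[0,\infty)$, so $f(g(x))$ makes sense for every $x\ge 0$. Also, being nonnegative and in $\mathcal{W}$, both $f$ and $g$ are nondecreasing by Theorem \ref{eq:nonnegativity}.

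The main argument is a two-step chain. Fix $x\ge y\ge 0$; it suffices to show $h(x+y)\ge h(x)+h(y)$. \emph{Step one:} since $g\in\mathcal{W}$, condition (\ref{eq:newcondition2}) gives $g(x+y)\ge g(x)+g(y)$; applying the nondecreasing function $f$ to both sides preserves the inequality, so $f(g(x+y))\ge f(g(x)+g(y))$. \emph{Step two:} the numbers $u:=g(x)$ and $v:=g(y)$ satisfy $u\ge v\ge 0$ (because $g$ is nonnegative and nondecreasing and $x\ge y$), so condition (\ref{eq:newcondition2}) applied to $f\in\mathcal{W}$ at $u,v$ yields $f(u+v)\ge f(u)+f(v)$, that is, $f(g(x)+g(y))\ge f(g(x))+f(g(y))$. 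Concatenating the two steps,
$$ h(x+y)=f(g(x+y))\ \ge\ f(g(x)+g(y))\ \ge\ f(g(x))+f(g(y))=h(x)+h(y), $$
which is exactly (\ref{eq:newcondition2}) for $h$, so $h=f\circ g\in\mathcal{W}$.

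I do not anticipate any genuine obstacle here; the only matters requiring care are bookkeeping. First, one must check the composition is well defined, which is where nonnegativity of $g$ enters (and it is used a second time, to place $g(x),g(y)$ in the domain and ordering for which (\ref{eq:newcondition2}) applies). Second, the ability to push $f$ across the inequality $g(x+y)\ge g(x)+g(y)$ in Step one relies on $f$ being nondecreasing, which is precisely the payoff of assuming $f$ nonnegative, via Theorem \ref{eq:nonnegativity}. It is worth flagging that without the nonnegativity hypotheses the claim can fail, since monotonicity is essential to Step one.
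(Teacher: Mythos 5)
Your proof is correct and is essentially the paper's argument in mirror image: the paper verifies condition (\ref{eq:newcondition1}) via $f(g(x))-f(g(y))\geq f(g(x)-g(y))\geq f(g(x-y))$, while you verify the equivalent condition (\ref{eq:newcondition2}); both rest on the same two ingredients, namely the monotonicity of $f$ supplied by Theorem \ref{eq:nonnegativity} and the defining inequality of $\mathcal{W}$ applied to $f$ at the points $g(x),g(y)$ and to $g$ at $x,y$. Your additional bookkeeping (well-definedness of the composition and the ordering $g(x)\geq g(y)\geq 0$) is a welcome refinement of the paper's terser presentation.
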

\begin{proof}
By Theorem \ref{eq:nonnegativity}  both $f\in \mathcal{W }  $ and $g \in \mathcal{W }$ are  non decreasing functions. We get:
$$ f(g(x))-f(g(y))  \geq f(g(x)-g(y))    \geq f(g(x-y))   $$
 This shows that $f \circ g $ satisfies condition (\ref{eq:newcondition1}) and hence in $ \mathcal{W}$.
\end{proof}


\begin{theorem}
  Let $f$ be any function that satisfies the generalized Weinberger's inequality   (\ref{eq:generalizedinequality}). Then $ f \in \mathcal{W }$
\end{theorem}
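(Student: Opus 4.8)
The statement asserts the converse direction: if $f$ satisfies the generalized Weinberger inequality \eqref{eq:generalizedinequality} for all admissible sequences (all $k$ and all $a_1 \geq a_2 \geq \cdots \geq a_k \geq 0$), then $f \in \mathcal{W}$, i.e. $f$ satisfies the superadditivity condition \eqref{eq:newcondition2}. The plan is to deduce condition \eqref{eq:newcondition1} directly by specializing the hypothesis to $k = 3$. Given arbitrary $x \geq y \geq 0$, I want to choose a decreasing triple $(a_1, a_2, a_3)$ so that the alternating sum $a_1 - a_2 + a_3$ equals $x - y$ and the right-hand side $f(a_1 - a_2 + a_3)$ becomes $f(x-y)$, while the left-hand side $f(a_1) - f(a_2) + f(a_3)$ collapses to $f(x) - f(y)$. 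The natural choice is $a_1 = x$, $a_2 = y$, $a_3 = 0$: this is admissible precisely because $x \geq y \geq 0$, the alternating sum is $x - y + 0 = x - y$, and \eqref{eq:generalizedinequality} with $k=3$ then reads
\[
 f(x) - f(y) + f(0) \geq f(x - y).
\]

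The remaining gap is the stray $f(0)$ term: to land exactly on \eqref{eq:newcondition1} I must know $f(0) \leq 0$. I would obtain this by a second specialization of the hypothesis. Taking $k = 2$ with $a_1 = a_2 = t$ for any $t \geq 0$ (an admissible sequence since $t \geq t \geq 0$), inequality \eqref{eq:generalizedinequality} gives $f(t) - f(t) \geq f(t - t)$, that is, $0 \geq f(0)$. (Alternatively $k=3$ with $a_1=a_2=a_3=0$ gives $f(0) \geq f(0)$, which is useless; the two-term case is the one that does the work, or equally $k=2$ with $a_1=a_2=0$.) Combining $f(0)\le 0$ with the displayed $k=3$ inequality yields $f(x) - f(y) \geq f(x) - f(y) + f(0) \geq \ldots$ — wait, the inequality must be chained correctly: from $f(x)-f(y)+f(0)\ge f(x-y)$ and $f(0)\le 0$ we get $f(x)-f(y) \geq f(x)-f(y)+f(0) \geq f(x-y)$, so $f(x) - f(y) \geq f(x-y)$, which is exactly \eqref{eq:newcondition1}. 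By the first theorem of this section, \eqref{eq:newcondition1} and \eqref{eq:newcondition2} are equivalent, so $f \in \mathcal{W}$.

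I do not anticipate a genuine obstacle here; the only subtlety is making sure the specialized sequences are \emph{admissible} in the sense of \eqref{eq:nonnegativedomaincondition} (weakly decreasing and nonnegative), which the choices $(x,y,0)$ and $(t,t)$ both satisfy. One should also state clearly at the outset what "satisfies the generalized Weinberger inequality" means — namely that \eqref{eq:generalizedinequality} holds for \emph{every} $k \geq 1$ and every sequence obeying \eqref{eq:nonnegativedomaincondition} — since the argument uses two different values of $k$. If the intended reading were that $f$ satisfies the inequality only for a single fixed $k$, a little more care would be needed, but the $k=2$ and $k=3$ cases are the natural minimal hypotheses and suffice. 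I would therefore present the proof in three short steps: (i) $f(0)\le 0$ from the two-term case; (ii) the three-term case with $(x,y,0)$; (iii) combine to get \eqref{eq:newcondition1} and invoke the equivalence theorem.
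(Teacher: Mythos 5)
Your proof is correct, but it is more roundabout than necessary: the paper simply specializes the hypothesis to $k=2$ with $a_1=x$, $a_2=y$, which already reads $f(x)-f(y)\geq f(x-y)$, i.e.\ condition (\ref{eq:newcondition1}) verbatim, with no stray $f(0)$ term to dispose of. Your detour through $k=3$ with the triple $(x,y,0)$ plus the auxiliary step $f(0)\le 0$ (from $a_1=a_2=t$) is valid and lands in the same place, but the two-term case alone does all the work. Your closing remark about stating precisely what ``satisfies the generalized inequality'' means (for all $k$ and all admissible sequences, versus a single $k$) is a fair point that the paper leaves implicit; under the all-$k$ reading both arguments go through, and under a fixed-$k$ reading only $k=2$ is actually needed.
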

\begin{proof}
  Let $f$ satisfy the inequality (\ref{eq:generalizedinequality}). Then fixing $k=2$, we get  the condition (\ref{eq:newcondition1}) of $\mathcal{W}$. This proves the Theorem.
\end{proof}

\subsection{Proof of Szeg\"{o}'s Inequality}
\begin{proof}
  The condition (\ref{eq:nonnegativedomaincondition}) was considered with  all strict inequalities in Szeg\"{o} as cited in \cite{IO}.   For notational brevity  let us  use:
 \begin{equation}\label{eq:brevity}
     \sum_{s=1}^{m}(-1)^{s-1} a_s := S_m(\tilde{a}_s),\quad   \sum_{s=1}^{m}(-1)^{s-1} f(a_s):= S_m( \tilde{f}(a_s))
 \end{equation}

We use induction. For $m=1$, the inequality $f(a_1) \leq f(a_1)$ holds trivially. Next we prove the inequality for $m=2$. From the assumption (\ref{eq:nonnegativedomaincondition}) we have that
$$ 0 \leq a_3 \leq a_1-a_2+a_3 \leq a_1   $$
It may happen that either
\begin{equation}\label{eq:3termcase1}
  a_3 \leq a_2 \leq a_1-a_2+a_3 \leq a_1
\end{equation}
or
\begin{equation}\label{eq:3termcase2}
  a_3  \leq a_1-a_2+a_3    \leq a_2       \leq a_1
\end{equation}
By the convexity of $f$,
\begin{equation}\label{eq:3termconvexity}
  \frac{f(a_1-a_2+a_3)-f(a_3)}{a_1-a_2} \leq \frac{f(a_1)-f(a_2)}{a_1-a_2}.
\end{equation}
Therefore,
\begin{equation}\label{eq:3terminequality}
  f(a_1-a_2+a_3)-f(a_3) \leq f(a_1)-f(a_2)+f(a_3).
\end{equation}
Now assume that the inequality holds for $m=k$. That is,
\begin{equation}\label{eq:inductionassumption}
  f(S_{2k-1}(\tilde{a}_s)) \leq S_{2k-1}( \tilde{f}(a_s)).
\end{equation}
Then we have
\begin{equation}\label{eq:2kplus1thsum}
   S_{2k+1}(\tilde{a}_s)   =   S_{2k-1}(\tilde{a}_s) -a_{2k} + a_{2k+1}
\end{equation}
From (\ref{eq:2kplus1thsum}) and (\ref{eq:nonnegativedomaincondition})  it follows that
\begin{equation}\label{eq:lastarrangement}
  S_{2k-1}(\tilde{a}_s) \geq a_{2k} \geq a_{2k+1}
\end{equation}
From (\ref{eq:lastarrangement}) and the result we have proved for $m=2$, we have
\begin{align*}
  f( S_{2k+1}(\tilde{a}_s) )  & \leq f( S_{2k-1}(\tilde{a}_s))-f(a_{2k})+f(a_{2k+1}) \\
   & \leq   S_{2k-1}( \tilde{f}(a_s))-f(a_{2k})+f(a_{2k+1})  \\
   & =  S_{2k+1}( \tilde{f}(a_s))
\end{align*}

Hence the Theorem is proved.
\end{proof}

Szeg\"{o}'s inequality, whose subjects are convex functions, is valid for inequalities involving odd number of terms. But it does not treat inequalities with even number of terms. In the next Theorem, we see that the case of even number of terms is resolved if the function $f \in \mathcal{W} $ in addition to its being convex.
\begin{theorem}\label{eq:convexplusw}
  For any  convex function $ f \in \mathcal{W} $ we have
  \begin{equation}\label{eq:generalizationequation}
   f( S_{2m}( \tilde{a}_s) )   \leq S_{2m}( \tilde{f}(a_s))
  \end{equation}
\end{theorem}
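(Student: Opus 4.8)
The plan is to reduce the even-term case to the odd-term case (Szegő's inequality, just proved) by appending one extra term, using membership in $\mathcal{W}$ to control the error. Given $a_1 \geq a_2 \geq \cdots \geq a_{2m} \geq 0$, set $a_{2m+1} := 0$. Then $a_1 \geq \cdots \geq a_{2m} \geq a_{2m+1} \geq 0$, and
$$ S_{2m+1}(\tilde a_s) = S_{2m}(\tilde a_s) + a_{2m+1} = S_{2m}(\tilde a_s), $$
while $S_{2m+1}(\tilde f(a_s)) = S_{2m}(\tilde f(a_s)) + f(0)$. Applying Szegő's inequality (valid for the $2m+1$ terms, which are positive except possibly for the last being zero — one should check the proof of Szegő above goes through with the weak inequality $a_{2m+1}\ge 0$, which it does since only convexity on $[0,a_1]$ was used) gives
$$ f\!\left( S_{2m}(\tilde a_s) \right) = f\!\left( S_{2m+1}(\tilde a_s) \right) \leq S_{2m+1}(\tilde f(a_s)) = S_{2m}(\tilde f(a_s)) + f(0). $$
Since $f \in \mathcal{W}$, Theorem \ref{eq:nonnegativity} gives $f(0) \leq 0$, so $S_{2m}(\tilde f(a_s)) + f(0) \leq S_{2m}(\tilde f(a_s))$, which yields \eqref{eq:generalizationequation}.

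An alternative, more self-contained route avoids quoting Szegő and instead mimics its induction directly for the even case. For the base case $m=1$ (two terms $a_1 \geq a_2 \geq 0$), convexity gives $\dfrac{f(a_1)-f(a_2)}{a_1-a_2} \geq \dfrac{f(a_2)-f(0)}{a_2-0}$ when $a_1 > a_2 > 0$, i.e. $a_2\,f(a_1) - a_2\,f(a_2) \geq (a_1-a_2)f(a_2) - (a_1-a_2)f(0)$, hence $f(a_1) - f(a_2) \geq \frac{a_1-a_2}{a_1}\big(f(a_2)-f(0)\big) \cdots$; more cleanly, write $a_2 = \frac{a_2}{a_1}a_1 + \big(1-\frac{a_2}{a_1}\big)\cdot 0$ and apply convexity to get $f(a_2) \leq \frac{a_2}{a_1}f(a_1) + \big(1-\frac{a_2}{a_1}\big)f(0) \leq \frac{a_2}{a_1}f(a_1)$ using $f(0)\le0$, and then one checks $f(a_1) - f(a_2) \geq f(a_1-a_2)$ similarly — this is exactly condition \eqref{eq:newcondition1}, which holds since $f \in \mathcal{W}$. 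So for two terms the inequality $f(a_1-a_2) \leq f(a_1)-f(a_2)$ is literally the defining property of $\mathcal{W}$ and needs no extra work. For the inductive step, one uses $S_{2m+2}(\tilde a_s) = S_{2m}(\tilde a_s) - a_{2m+1} + a_{2m+2}$ together with $S_{2m}(\tilde a_s) \geq a_{2m+1} \geq a_{2m+2} \geq 0$, applies a three-term estimate of the form \eqref{eq:3terminequality}, and invokes the induction hypothesis.

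The main obstacle is not any single estimate but making sure the degenerate/boundary configurations are handled: in the $\mathcal{W}$-reduction route one must confirm that the proof of Szegő's inequality given above remains valid when the smallest term is $0$ rather than strictly positive (it does, since the argument only used convexity on $[0,a_1]$ and the ordering inequalities, all of which tolerate equalities), and in the direct-induction route one must handle the case where some of the intermediate quantities coincide (e.g. $a_1 = a_2$, or $S_{2m}(\tilde a_s) = a_{2m+1}$), where the difference quotients in the convexity step become $0/0$; these cases are trivial because both sides collapse, but they should be mentioned. I would present the first route as the main proof since it is shortest, explicitly noting the use of $f(0)\le 0$ from Theorem \ref{eq:nonnegativity} as the place where membership in $\mathcal{W}$ (beyond convexity) is essential — this is precisely the point the paper wishes to highlight, namely that $f(0)\le 0$ is exactly what rescues the even-term case that Szegő's theorem leaves open.
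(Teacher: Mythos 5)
Your main argument is correct, and it takes a genuinely different (if closely related) route from the paper's. The paper also reduces to Szeg\H{o}'s inequality, but in the opposite direction: it \emph{peels off} the last term, writing $S_{2m}(\tilde a_s)=\bigl(a_1-a_2+\cdots+a_{2m-1}\bigr)-a_{2m}$ and applying the defining property \eqref{eq:newcondition1} of $\mathcal{W}$ to get $f(S_{2m}(\tilde a_s))\le f(S_{2m-1}(\tilde a_s))-f(a_{2m})$, then invokes Szeg\H{o} on the remaining $2m-1$ terms. You instead \emph{pad} the sum with $a_{2m+1}=0$, apply Szeg\H{o} to $2m+1$ terms, and discard the surplus $f(0)\le 0$. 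Your version has the virtue of isolating exactly which consequence of $f\in\mathcal{W}$ is needed beyond convexity --- only $f(0)\le 0$, which dovetails with Theorem \ref{eq:convexplusIC} --- whereas the paper uses the full subtraction property \eqref{eq:newcondition1} (for which it must also note $S_{2m-1}(\tilde a_s)\ge a_{2m}$, a point worth making explicit in either write-up). The cost of your route is the need to check that Szeg\H{o}'s inequality, stated in the paper with strict inequalities $a_1>\cdots>a_{2m-1}>0$, survives a zero last term; you flag this correctly, and indeed the paper's own induction only uses weak orderings and convexity on $[0,a_1]$, so the check goes through. Your ``alternative, more self-contained'' second route is left as a sketch and its base-case computation wanders before arriving at the right observation (that the two-term case is literally \eqref{eq:newcondition1}); I would drop it or tighten it, and present the padding argument as the proof.
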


\begin{proof}
  For $m=1$, the result follows from the definition of $\mathcal{W}$. For $m> 1$,  by  Szsg\"{o}'s inequality and the assumption that $ f \in \mathcal{W }$, we  have
\begin{align*}
   f( S_{2m}( \tilde{a}_s) )  & =f(a_1-a_2+a_3-a_4+...+a_{2m-1}-a_{2m}) \\
     & =f((a_1-a_2+a_3-a_4+...+a_{2m-1})-a_{2m})  \\
     &  \leq f(a_1-a_2+a_3-a_4+...+a_{2m-1}) -f(a_{2m})\\
     & \leq f(a_1)- f(a_2) +f( a_3) -f( a_4)+...+ f(a_{2m-1}) -f(a_{2m})\\
     & = S_{2m}( \tilde{f}(a_s))
     \end{align*}
Hence the theorem is proved.
\end{proof}
\noindent We have seen that every function $f \in \mathcal{W} $ satisfies the conation $ f(0) \leq 0 $. In the next theorem we see that every convex function that satisfies the condition $ f(0) \leq 0 $ is in $ \mathcal{W} $

\begin{theorem}\label{eq:convexplusIC}
  Let $f$ be a convex function with $f(0) \leq 0 $. Then $f \in\mathcal{ W} $.
\end{theorem}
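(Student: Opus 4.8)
The plan is to verify condition (\ref{eq:newcondition1}) directly, namely that $f(x) - f(y) \geq f(x-y)$ for all $x \geq y \geq 0$. Fix such $x$ and $y$, and write $z = x - y \geq 0$. The case $y = 0$ is immediate since then the claimed inequality reads $f(x) - f(0) \geq f(x)$, i.e. $f(0) \leq 0$, which is the hypothesis. So assume $y > 0$. The idea is to exploit the standard monotonicity of difference quotients for convex functions: for a convex $f$, the slope $\frac{f(b) - f(a)}{b - a}$ is nondecreasing in each of $a$ and $b$.

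First I would compare the two secant slopes over intervals of equal length $y$: the interval $[0, y]$ and the interval $[z, z+y] = [x-y, x]$. Since $z \geq 0$, convexity gives
\begin{equation*}
  \frac{f(y) - f(0)}{y} \;\leq\; \frac{f(x) - f(x-y)}{y}.
\end{equation*}
Multiplying through by $y > 0$ yields $f(y) - f(0) \leq f(x) - f(x-y)$, hence
\begin{equation*}
  f(x) - f(y) \;\geq\; f(x-y) - f(0) \;\geq\; f(x-y),
\end{equation*}
where the last step uses $f(0) \leq 0$. This is exactly (\ref{eq:newcondition1}), so $f \in \mathcal{W}$.

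The one point that needs care — and the only real obstacle — is justifying the slope inequality $\frac{f(y)-f(0)}{y} \leq \frac{f(x)-f(x-y)}{y}$ when $x - y$ may be less than $y$, i.e. when the intervals $[0,y]$ and $[x-y,x]$ overlap. This still follows from convexity: the cleanest route is to invoke the three-chord (three-slope) lemma, or equivalently the fact that for convex $f$ the function $t \mapsto \frac{f(t+y) - f(t)}{y}$ is nondecreasing in $t$ for fixed $y > 0$ — a consequence of writing $f(t+y)$ and $f(t)$ as convex combinations and comparing, or of integrating the monotone one-sided derivatives of $f$. Since the paper already uses the difference-quotient characterization of convexity in the proof of Szegő's inequality (see (\ref{eq:3termconvexity})), I would simply cite that same principle here. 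With that lemma in hand the argument above is complete; combined with Theorem \ref{eq:nonnegativity} (which gives the reverse inclusion $\mathcal{W} \cap \{\text{convex}\} \subseteq \{\text{convex},\, f(0) \leq 0\}$), this establishes the identity $\{f : f \text{ convex}\} \cap \mathcal{W} = \{f : f \text{ convex},\ f(0) \leq 0\}$ announced in the introduction.
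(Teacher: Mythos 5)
Your proposal is correct and follows essentially the same route as the paper: both proofs reduce the claim to the secant-slope comparison that yields $f(x-y)-f(0)\leq f(x)-f(y)$ and then invoke the hypothesis $f(0)\leq 0$. The only difference is organizational --- the paper splits into the two cases $x-y\leq y$ and $y\leq x-y$ so that each slope comparison (\ref{eq:case1}), (\ref{eq:case2}) involves non-overlapping intervals and is an immediate instance of the three-point convexity inequality, whereas you use the single translated-interval inequality $f(y)-f(0)\leq f(x)-f(x-y)$ uniformly, which requires the (still standard) fact that $t\mapsto f(t+y)-f(t)$ is nondecreasing even when $[0,y]$ and $[x-y,x]$ overlap; you correctly flag and justify this point.
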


\begin{proof}
  Let $x  \geq y \geq 0  $. We use the assumption that $f$ is convex and consider two possible cases. The first case is $0 \leq x-y \leq y \leq x $. In this case,  by the condition of convexity of $f$, we have
  \begin{equation}\label{eq:case1}
      \frac{f(x-y)-f(0)}{x-y}\leq  \frac{f(x)-f(y)}{x-y}.
  \end{equation}
The second case is $0 \leq y \leq x-y \leq x $. In this case, by the condition of convexity of $f$, we have

  \begin{equation}\label{eq:case2}
    \frac{f(y)-f(0)}{y}\leq  \frac{f(x)-f(x-y)}{y}.
  \end{equation}
  Both inequalities (\ref{eq:case1}) and (\ref{eq:case2}) yield:
  \begin{equation}\label{eq:essentialfofzero}
    f(x-y)-f(0) \leq f(x)-f(y).
  \end{equation}
  If $f(0) \leq 0 $, by (\ref{eq:essentialfofzero}), we have $f(x-y) \leq f(x)-f(y)  $. This implies that $f \in \mathcal{W} $.
\end{proof}
The following theorem is the generalization of the Weinberger's inequality with alternating signs. It is not  restricted to convex functions of the form $f(x)=x^r, r> 1 $.  Due to the additional condition $f(0) \leq 0 $, the inequality is not restricted to odd number of terms like Sezeg\"{o}'s inequality.
\begin{theorem}
  Assume condition given in (\ref{eq:nonnegativedomaincondition}). If $f$ is a convex function with the condition $f(0) \leq 0 $. Then for any $m \in \mathbb{N} $, we have
  $$ f(S_{m}(\tilde{a}_s) \leq S_m( \tilde{f}(a_s)) $$
\end{theorem}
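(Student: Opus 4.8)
The plan is to reduce the statement to the two cases already settled above, sorted by the parity of $m$. If $m=1$ the inequality is the trivial identity $f(a_1)\le f(a_1)$, so assume $m\ge 2$. One preliminary point deserves a remark: Szeg\"{o}'s inequality was stated above with all the inequalities in (\ref{eq:nonnegativedomaincondition}) strict, whereas here we only assume $a_1\ge a_2\ge\dots\ge a_m\ge 0$. However, the inductive argument given for Szeg\"{o}'s inequality uses (\ref{eq:nonnegativedomaincondition}) only through the non-strict relations $S_{2k-1}(\tilde{a}_s)\ge a_{2k}\ge a_{2k+1}$ and the convexity slope inequality, both of which remain valid when equalities are allowed; so the induction goes through verbatim in the non-strict setting, and I would simply note this rather than invoke a limiting argument.

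For $m$ odd, write $m=2k-1$. Then Szeg\"{o}'s inequality, established above using only the convexity of $f$, gives immediately $f(S_{2k-1}(\tilde{a}_s))\le S_{2k-1}(\tilde{f}(a_s))$; the hypothesis $f(0)\le 0$ is not needed in this case. For $m$ even, write $m=2k$. Here I would invoke Theorem \ref{eq:convexplusIC}: since $f$ is convex and $f(0)\le 0$, it follows that $f\in\mathcal{W}$. With $f$ both convex and in $\mathcal{W}$, Theorem \ref{eq:convexplusw} applies and yields $f(S_{2k}(\tilde{a}_s))\le S_{2k}(\tilde{f}(a_s))$. Combining the two parity cases proves the theorem for every $m\in\mathbb{N}$.

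The proof is thus a short case analysis, and its only subtle ingredient is conceptual rather than computational: the even case genuinely requires $f\in\mathcal{W}$, and it is precisely the condition $f(0)\le 0$ — via Theorem \ref{eq:convexplusIC} — that supplies this membership. Without it the even-term inequality can fail (the function $f(x)=e^x$ mentioned in the introduction being the standard counterexample), which is exactly why Bellman's original $f(0)\ge 0$ had to be corrected to Olkin's $f(0)\le 0$. So the main obstacle, such as it is, lies in recognizing that Szeg\"{o}'s theorem (odd $m$) and Theorem \ref{eq:convexplusw} (even $m$) exactly tile all cases, with $f(0)\le 0$ serving as the linchpin that unlocks the even case.
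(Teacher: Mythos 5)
Your proposal is correct and is exactly the argument the paper intends: the paper actually states this theorem without any proof, leaving it as the immediate combination of Szeg\"{o}'s inequality (odd $m$) with Theorem \ref{eq:convexplusIC} and Theorem \ref{eq:convexplusw} (even $m$), which is precisely the case analysis you carry out. Your added remark that the inductive proof of Szeg\"{o}'s inequality survives the passage from strict to non-strict orderings (the degenerate case $a_1=a_2$ making both sides of (\ref{eq:3terminequality}) reduce trivially) is a useful point of care that the paper itself glosses over.
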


\subsection{ A Counterexamples to some perviously proposed generalized inequality  }

\begin{example}
Consider the function $f(x) = e^x $. Then $f$ satisfies all the conditions in (\ref{eq:Bellmanscondition}) set by Bellman \cite{RB}. Let $a_1= 1, a_2=0.1 $. We have
$$f(a_1- a_2)=f(0.9)= 2.4596 > 1.61311 =f(1)-f(0.1). $$
This contradicts the proposed generalized inequality (\ref{eq:generalizedinequality}).
\end{example}

\section{Summary notes}
\begin{itemize}
\item The class $\mathcal{W}$ is the set of  all functions $f$ that satisfy  the generalized inequality (\ref{eq:generalizedinequality}) exactly for  two terms. That is $k=2$.
\item $f(x)= \lfloor x \rfloor $ in the class $ \mathcal{W} $. But it is not convex. As a result it fails to satisfy condition in Theorem \ref{eq:convexplusw}. To see this, take $ a_1= 4.6, a_2 =3.1, a_3 =2.8, a_4=1.2 $.
\item The class $\mathcal{W}$ is closed under addition, multiplication,  scaler multiplication  by positive numbers. The subset of $\mathcal{W}$  that constitute all non negative functions is closed under  the composition of functions. Therefore $\mathcal{W}$ is an infinite set of functions or even uncountably infinite set of functions.
  \item Every element $f\in \mathcal{W }$ satisfies the condition $f(0) \leq 0 $.
\item Every nonnegative function $f\in \mathcal{W }$ is nondecreasing. For example, $ f(x) =x^r, r> 1 $, that are subjects of Weinberger's inequality.

\item The function
$$ f(x)=
    \begin{cases}
       & x \ln x,            \mbox{  if  } x > 0, \\
       &   0,                  \mbox{ if } x  = 0
    \end{cases} $$
    is convex on $[0, \infty )$. With additional condition $f(0)=0 $, it is in $\mathcal{W}$. It is negative on $(0,1)$ and it is increasing on $[1,\infty)$, where it takes nonnegative values.
\item  Convex function, for example, $f(x)=e^x $  satisfy Szeg\"{o}'s inequality (\ref{eq:szegoinequality}). However it is not in the class $\mathcal{W}$. As a result it fails to satisfy the inequality (\ref{eq:generalizationequation}) .
\item  The set all function $f$ that generalize Weinberger's inequality are the ones which are in the intersection of the set of convex functions and $\mathcal{W}$.
\end{itemize}

%
%
%
%

\section*{Conflict of interests}
The author declare that there is no conflict of interests regarding the publication of this paper.

\section*{Acknowledgment}
The author is thankful to Professor Albert Erkip (Doctoral advisor) for his continuous support and encouragement in his intellectual progress.
\section*{Data Availability}
The is no external data used in this paper.
\section*{Funding} This Research work is not funded by any institution or person.

\end{document}